\documentclass[11pt]{amsart}
\usepackage[T1]{fontenc}
\usepackage[utf8]{inputenc}
\usepackage{lmodern,amsmath,amssymb,mathtools,microtype}
\usepackage[margin=1.08in]{geometry}
\usepackage[hidelinks]{hyperref}
\hypersetup{pdftitle={Quantum Laurent positivity in rank three},pdfsubject={Quantum Laurent positivity}}

\numberwithin{equation}{section}
\newtheorem{theorem}{Theorem}[section]
\newtheorem{lemma}[theorem]{Lemma}
\newtheorem{proposition}[theorem]{Proposition}
\newtheorem{corollary}[theorem]{Corollary}
\theoremstyle{definition}

\newcommand{\Z}{\mathbb Z}
\newcommand{\N}{\mathbb N}
\newcommand{\Q}{\mathbb Q}
\newcommand{\bt}{\widetilde B}
\newcommand{\diag}{\operatorname{diag}}
\newcommand{\Mat}{\operatorname{Mat}}
\newcommand{\ee}{\mathbf e}
\title[Quantum Laurent positivity in rank three]{Quantum Laurent positivity in rank three}
\author{Qiyue Tang}
\address{Zhili College, Tsinghua University, Beijing 100084, China}
\email{tangqy24@mails.tsinghua.edu.cn}
\subjclass[2020]{13F60, 16G20, 17B37}
\keywords{Quantum cluster algebra, Laurent positivity}
\begin{document}
\begin{abstract}
We prove Laurent positivity for skew-symmetrizable quantum cluster algebras of rank three, with arbitrary integral compatible quantizations and invertible frozen variables. The proof uses mutation-acyclic positivity from a companion manuscript.
\end{abstract}
\maketitle

\section{Introduction}

Cluster algebras were introduced by Fomin and Zelevinsky as an
algebraic framework related to total positivity and canonical bases
\cite[Introduction]{FZ1}. Their generators, called cluster variables,
are grouped into overlapping clusters connected by mutation. Although
mutation is defined by rational exchange relations, every cluster
variable is a Laurent polynomial in any chosen cluster
\cite[Theorem~3.1]{FZ1}. The positivity conjecture asserts that the
coefficients of these expansions are nonnegative; it was formulated
immediately after that theorem.

Lee and Schiffler proved positivity for skew-symmetric coefficient-free
cluster algebras of rank three \cite[Theorem~1.1]{LS3}, using rank-two
expansion formulas and induction along mutation paths. They subsequently
proved positivity for skew-symmetric cluster algebras of arbitrary rank
\cite[Theorem~1.1]{LS}. Gross, Hacking, Keel and Kontsevich established
classical Laurent positivity for skew-symmetrizable cluster algebras of
geometric type through scattering diagrams and broken lines
\cite[Theorem~4.10]{GHKK}.

Berenstein and Zelevinsky introduced quantum cluster algebras by
replacing each commutative cluster with a family of quasi-commuting
variables \cite{BZ}. Their quantum Laurent phenomenon
\cite[Corollary~5.2]{BZ} places every cluster variable in the based
quantum torus of every seed. Quantum Laurent positivity asks that its
coefficients in the normalized monomial basis lie in
$\N[q^{\pm1/2}]$. Davison proved this for skew-symmetric quantum cluster
algebras by cohomological methods, using purity of the mixed Hodge
structures arising from quivers with potential
\cite[Theorem~2.4]{Davison}.

The rank is the number of mutable directions. We use the
Berenstein--Zelevinsky convention over $\Z[q^{\pm1/2}]$, where
$q^{1/2}$ is a formal indeterminate.
A seed consists of a toric frame $M:\Z^m\to\mathcal F\setminus\{0\}$ and an
integer exchange matrix $\bt\in\Mat_{m\times n}(\Z)$ whose principal
part $B$ is skew-symmetrizable. The skew field $\mathcal F$ contains
the based quantum torus of the frame. Its integral skew-symmetric
form $\Lambda$ satisfies
\begin{equation}\label{eq:compatibility}
 \bt^T\Lambda=(D\ \ 0),\qquad
 D=\diag(d_1,\ldots,d_n),\quad d_i>0,
\end{equation}
and the based monomials multiply by
\begin{equation}\label{eq:torus}
 M(a)M(b)=q^{\frac12 a^T\Lambda b}M(a+b).
\end{equation}
The generators $X_i=M(\ee_i)$ with $i\le n$ are mutable; the others
are frozen and invertible. Precise mutation conventions are recalled
in Section~\ref{sec:prelim}.

Write $\Sigma_t=(M_t,\bt_t)$ for the seeds of a quantum cluster
pattern indexed by the labelled $n$-regular tree $\mathbb T_n$, and
$X_{j;t}=M_t(\ee_j)$, $1\le j\le n$, for its cluster variables. Fix an arbitrary vertex
$t_0$ as the initial vertex.

\begin{theorem}\label{thm:A}
Suppose that $n=3$. For every integral compatible initial seed
$\Sigma_{t_0}$, every $t\in\mathbb T_3$ and $j\in\{1,2,3\}$, there is
a finite expansion
\[
 X_{j;t}=\sum_{h\in\Z^m}c_{j;t,h}^{t_0}(q^{1/2})M_{t_0}(h),
 \qquad c_{j;t,h}^{t_0}(q^{1/2})\in\N[q^{\pm1/2}].
\]
\end{theorem}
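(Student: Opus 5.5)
The plan is to split according to whether the mutation class of $B_{t_0}$ contains an acyclic matrix.

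\emph{Acyclic case.} If the mutation class of the principal part $B_{t_0}$ contains an acyclic matrix, invoke the mutation-acyclic positivity theorem from the companion manuscript. That theorem gives quantum Laurent positivity, for arbitrary integral compatible quantizations and invertible frozen variables, whenever the exchange matrix is mutation-equivalent to an acyclic one.

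\emph{Reduction for the remaining case.} Otherwise, $B$ is a rank-three skew-symmetrizable matrix that is mutation-cyclic. In rank three every $3\times3$ skew-symmetrizable matrix is either mutation-acyclic or has all its mutations cyclic. For cyclic matrices we use the Markov-type invariant, e.g.\ for skew-symmetric $B$ with entries $a,b,c$,
\[
  a^2+b^2+c^2-abc
\]
(and its symmetrized analogue via $\sqrt{|b_{ij}b_{ji}|}$ in general). This gives a combinatorial description of the exchange graph. In the mutation-cyclic case, the Beineke--Brüstle--Hille / Seven / Felikson--Shapiro--Tumarkin results say that along any reduced mutation path away from a suitable seed the entries grow monotonically. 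Consequently each sign of the exchange matrix is determined along the path.

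\emph{Induction in the mutation-cyclic case.} Here I would imitate Lee--Schiffler's rank-three argument in the quantum setting. Write the path from $t_0$ to $t$ as a sequence of mutations. Express $X_{j;t}$ by induction on path length, using the quantum exchange relation at each step. Then prove positivity of each step by reducing to rank two: for two consecutive directions $k,\ell$, the variables obtained by alternating $\mu_k,\mu_\ell$ live in a rank-two quantum cluster algebra with coefficients. Here the third variable and the frozen ones are treated as frozen, and the rank-two form of $\Lambda$ is still compatible. Rank-two quantum positivity (Rupel's quantum greedy basis / quantum Lee--Schiffler formulas, or again the companion acyclic result, since rank two is always acyclic) then expresses these variables positively in the seed at the start of the rank-two segment. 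Concatenating segments, one must show that substituting positive expansions into positive expansions preserves positivity. This holds once the intermediate cluster monomials are themselves shown to have positive expansions with respect to $t_0$. I would prove that by strengthening the induction hypothesis to: all quantum cluster monomials of $\Sigma_t$ expand positively in $M_{t_0}$. For this, quasi-commutation inside a cluster together with the normalization of $M_t$ gives positivity of products, up to powers of $q^{1/2}$.

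\emph{Expected obstacle.} The difficulty is that naive substitution of $X_{k;t'}$ through the exchange relation involves a subtraction (dividing by the old variable). The key step is therefore to choose the decomposition of the path so that each new variable is expressed without division. Following Lee--Schiffler, the monotone growth in the cyclic case lets one write $X_{j;t}$ as a positive rank-two expansion in a seed two steps closer to $t_0$, whose cluster monomials are positive by induction. Verifying this for arbitrary skew-symmetrizable entries and arbitrary compatible $\Lambda$ requires checking that the rank-two restriction remains integrally compatible, with $d_i$ adjusted. I expect this combinatorial step, controlling the shape of the path, to be the main work. The frozen variables only contribute monomial factors $M(h)$ with invertible entries, so they do not affect positivity.
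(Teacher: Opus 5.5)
Your treatment of the mutation-acyclic case, and of rank-two positivity by freezing, agrees with the paper. The mutation-cyclic case, however, has a genuine gap exactly at the step you flag as the obstacle.

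In your scheme, rank-two positivity writes $X_{j;t}$ as a positive Laurent polynomial in the seed $t'$ where the final two-label segment begins. You then substitute the expansions of the variables of $t'$ in $t_0$. Those Laurent monomials generally have negative exponents in the two active directions; a single exchange $X_k'=A_kX_k^{-1}$ already introduces one. Positivity of the cluster monomials of $t'$ in $t_0$ says nothing about $X_{k;t'}^{-1}$. Your strengthened hypothesis is in fact automatic, since products of positive elements are positive, so it adds nothing.

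Your proposed remedy is that monotone growth of the matrix entries lets you arrange the path so that $X_{j;t}$ has a division-free expansion in a seed two steps closer to $t_0$. This is not justified and is false in general. Growth of the entries of $B$ is a statement about the exchange matrices, not about the support of an expansion, and it does not remove denominators in the active variables.

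The missing idea is a support condition on the expansion together with a way to use it.
\begin{itemize}
\item The paper fixes the cluster variable $F$ as the generator of a source seed and lets the target seed vary. By induction it assumes $F$ is positive in three seeds: the seed $W$ where the final $a,b$ segment begins, and its neighbours $W[a]$ and $W[b]$. All three are reached by shorter paths.
\item Proposition~\ref{prop:separator} shows that every exponent $h$ of $F$ in $W[a]$ with $h_a>0$ has $h_b>0$. The proof builds positive weights $w$ with $wB=0$ along the path, which is possible because every matrix in the class is a directed three-cycle. It then transports a tropical weight vector and shows that $h_a-\lambda h_b<0$ on the whole support, via a specialization and growth-exponent argument.
\item Lemma~\ref{lem:glue} uses this inequality and the degree gradings in directions $a$ and $b$ to split $F=P+R_0+R_1$. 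The three pieces are positive sums of monomials with nonnegative $a,b$ exponents in $W[a]$, $W$ and $W[b]$ respectively.
\item Only such localized monomials can be carried positively through the rank-two segment, which is what Corollary~\ref{cor:rank-two} does.
\end{itemize}
Without an analogue of the separation inequality and the three-seed decomposition, concatenating rank-two segments does not give positivity.
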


Since $t_0$ is arbitrary, the theorem gives positivity in every seed.

A principal matrix is \emph{mutation-acyclic} if its mutation class
contains an acyclic oriented graph; otherwise it is called
\emph{mutation-cyclic}. We use the mutation-acyclic
positivity theorem of the companion manuscript
\cite[Theorem~1.1]{MA}, which is not yet publicly available.
We recall this result as Lemma~\ref{thm:acyclic-input},
which settles the mutation-acyclic case of Theorem~\ref{thm:A}. By freezing one mutable direction, it also yields the rank-two positivity needed in our proof of the mutation-cyclic case.

In the mutation-cyclic case, we use a tropical support inequality to decompose a Laurent expression into three positive parts, supported in adjacent seeds and with nonnegative exponents in the two active directions. We then argue by strong induction on the mutation word length. The induction hypothesis gives positivity in the three auxiliary seeds, while rank-two positivity carries the decomposition along the final two-direction segment.

\medskip
\noindent\textbf{Use of generative AI.}
The main results of this paper were generated using GPT-6-Astra and the
Danus system. The author supplied relevant references, checked the
manuscript, and revised its exposition.

\section{Quantum seeds and mutation-acyclic positivity}
\label{sec:prelim}

\subsection{Based quantum tori and seeds}
\leavevmode\par
We recall the quantum seed conventions used throughout the paper.

We use $\N=\{0,1,2,\ldots\}$ and write $\ee_1,\ldots,\ee_m$ for the
standard basis of $\Z^m$. For an integral skew-symmetric matrix
$\Lambda_0$, the based quantum torus $\mathcal T_{\Lambda_0}$ is the
free $\Z[q^{\pm1/2}]$-module with basis $\{X^h:h\in\Z^m\}$ and product
\[
 X^hX^u=q^{h^T\Lambda_0u/2}X^{h+u}.
\]
It is an Ore domain, with fraction division ring $\mathcal F$
\cite[Definition~4.1 and Section~11]{BZ}.

A toric frame is a map $M:\Z^m\to\mathcal F\setminus\{0\}$ of the
form $M(h)=\varphi(X^{\eta(h)})$, where $\eta:\Z^m\to\Z^m$ is a
lattice isomorphism and $\varphi$ is a $\Q(q^{1/2})$-algebra
automorphism of $\mathcal F$ \cite[Definition~4.3]{BZ}.
Its associated form is
$\Lambda_M(h,u)=\eta(h)^T\Lambda_0\eta(u)$.
When the frame is understood, we write $\Lambda=(\lambda_{ij})$
for the matrix of $\Lambda_M$. Thus
\[
 X_iX_j=q^{\lambda_{ij}}X_jX_i,\qquad X_i=M(\ee_i).
\]

A quantum seed is a pair $(M,\bt)$, where
$\bt=(b_{ij})\in\Mat_{m\times n}(\Z)$ and the matrix $\Lambda$ of
the frame satisfies \eqref{eq:compatibility}.
Its principal part $B$ consists of the first
$n$ rows of $\bt$. Compatibility implies that
$DB=\bt^T\Lambda\bt$ is skew-symmetric, so $B$ is
skew-symmetrizable. We use the oriented graph with an arrow
$i\to j$ when $b_{ij}>0$; acyclicity refers to this graph.

A quantum cluster pattern assigns a seed to each vertex of the
labelled $n$-regular tree $\mathbb T_n$, whose incident edges at each
vertex have distinct labels $1,\ldots,n$. Adjacent seeds are related
by mutation in the edge direction. The frozen generators
$X_{n+1},\ldots,X_m$ are unchanged throughout the pattern.
The quantum cluster algebra $\mathcal A_q$ is the
$\Z[q^{\pm1/2}]$-subalgebra of $\mathcal F$ generated by all cluster
variables and by the frozen generators and their inverses
\cite[Definition~4.12]{BZ}.

\subsection{Mutation and positive Laurent expressions}
\leavevmode\par
We recall the mutation rules and introduce the Laurent expansion
notation used in the proofs.

The based monomials of a toric frame are linearly independent over
$\Z[q^{\pm1/2}]$. Following \cite[Sections~3--4]{BZ}, they are
normalized by
\[
 M(h)=q^{-\frac12\sum_{i<j}h_i h_j\lambda_{ij}}
          X_1^{h_1}\cdots X_m^{h_m}.
\]
For $z\in\Z$, write $[z]_+=\max(z,0)$, and apply this notation
componentwise to vectors. Let $\bt_k$ be the $k$th column of $\bt$.
Mutation in direction $1\le k\le n$ replaces $X_k$ by
\begin{equation}\label{eq:mutation}
 X_k'=M(-\ee_k+[\bt_k]_+)+M(-\ee_k+[-\bt_k]_+),
\end{equation}
and retains the other generators. The exchange matrix mutates by
\[
 b_{ij}'=
 \begin{cases}
 -b_{ij},&i=k\text{ or }j=k,\\
 b_{ij}+[b_{ik}]_+b_{kj}+b_{ik}[-b_{kj}]_+,&\text{otherwise}.
 \end{cases}
\]
The form mutates with the frame according to the compatible-pair
rules of \cite[Sections~3--4]{BZ}. Mutations are involutions.

For a seed $W$ of the pattern, denote its toric frame by $M_W$ and
its based quantum torus by $\mathcal T_W$. Write $W[k]$ for its
mutation in direction $k$. A mutation path is \emph{reduced} if
consecutive labels are distinct.

By \cite[Corollary~5.2]{BZ}, every cluster variable of the pattern
belongs to $\mathcal T_W$ for every seed $W$.
For any $F\in\mathcal T_W$, write its unique finite expansion as
\[
 F=\sum_{h\in\Z^m}c_h(q^{1/2})M_W(h),\qquad
 c_h(q^{1/2})\in\Z[q^{\pm1/2}].
\]
Its support in $W$ is
$\operatorname{supp}_W(F)=\{h:c_h(q^{1/2})\ne0\}$.
We call $F$ \emph{positive in $W$} if every coefficient belongs to
$\N[q^{\pm1/2}]$; this includes $F=0$.
By \eqref{eq:torus}, sums and products of positive Laurent
polynomials in a fixed based torus are positive.

For $1\le a\le m$ and $I\subseteq\Z$, define the degree part
\[
 [F]_{W,a\in I}
 =\sum_{\substack{h\in\Z^m\\h_a\in I}}
 c_h(q^{1/2})M_W(h),
\]
where $h_a$ is the $a$th coordinate of $h$.
We write $[F]_{W,a>0}$ when $I=\Z_{>0}$, and similarly for other
ranges of exponents.

The specialization $\mathcal T_W/(q^{1/2}-1)\mathcal T_W$ is a
commutative Laurent polynomial ring, and the quantum exchange
relations specialize to the classical ones. If $F$ is positive in
$W$, each nonzero coefficient satisfies $c_h(1)>0$.
Thus, after this specialization, setting the original frozen
variables equal to $1$ preserves the mutable support: the projection
of $\operatorname{supp}_W(F)$ onto the first $n$ coordinates.

\subsection{Mutation-acyclic positivity and freezing}
\leavevmode\par
We use the following result to obtain positivity in the rank-two
patterns that occur in the proof.

\begin{lemma}[{\cite[Theorem~1.1]{MA}}]
\label{thm:acyclic-input}
For a mutation-acyclic skew-symmetrizable quantum cluster pattern of
arbitrary finite mutable rank, every cluster variable has a Laurent
expansion with coefficients in $\N[q^{\pm1/2}]$ in every seed, for every
integral compatible quantization and with invertible frozen variables.
\end{lemma}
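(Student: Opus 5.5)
The plan is to prove positivity through the quantum scattering diagram and theta functions. The mutation-acyclic hypothesis is used only once: to show that the wall-crossing functions of the diagram are positive. Concretely, I aim to show three things. First, the quantum scattering diagram $\mathfrak D$ attached to an acyclic seed has all wall-crossing automorphisms given by quantum dilogarithm–type series with coefficients in $\N[q^{\pm1/2}]$. Second, every cluster variable is a quantum theta function $\vartheta_g$ for a $g$-vector $g$ lying in a chamber. Third, broken-line expansions of $\vartheta_g$ in the chamber of any seed $W$ then have coefficients in $\N[q^{\pm1/2}]$. Since the cluster complex is mutation-invariant, the choice of acyclic base seed is harmless: every seed of the pattern corresponds to a chamber of the same diagram.

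\emph{Step 1: reduce the quantization.} I first remove the dependence on the particular integral compatible pair. Using the frozen rows, I enlarge $\bt$ to a matrix of full rank carrying principal coefficients in a suitable sense. I then show that an arbitrary compatible $(\bt,\Lambda)$ with invertible frozen variables is obtained from this universal one by a monomial specialization of frozen variables. Such a specialization is a based-torus homomorphism: it sends based monomials to $q$-power multiples of based monomials, with exponents determined by $\Lambda$. It therefore preserves membership in $\N[q^{\pm1/2}]$, and positivity need only be proved in the universal situation.

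\emph{Step 2: positivity of the diagram (main obstacle).} By mutation-acyclicity I may take the base seed acyclic. In the skew-symmetric acyclic case, the consistent diagram is the Hall-algebra or Donaldson–Thomas diagram of the path algebra. Its wall functions are generating series of motivic DT invariants, and purity for representations of acyclic quivers without potential makes these invariants positive. For a skew-symmetrizable acyclic $B$, I would use an acyclic skew-symmetric unfolding $(\widehat B,\Gamma)$ with a group $\Gamma$ acting on the vertices. The $\Gamma$-invariant part of the unfolded quantum diagram, restricted to the fixed sublattice, should be the diagram for $B$ with symmetrizers $D$. Positivity would pass through this restriction, because it only identifies variables and rescales $q$-exponents by orbit sizes.

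The delicate point is the compatibility of the quantum parameter under folding. The form $\Lambda$ with $\bt^T\Lambda=(D\ \ 0)$ has to match the restriction of the unfolded form, and one has to check that the quantum dilogarithms with $q^{d_i}$ arise correctly. If folding fails at the quantum level, my fallback is a direct consistency argument. Write $\mathfrak D$ as an ordered product over slopes, starting from the positive initial walls $1+X^{\ee_i}$ with parameter $q^{d_i}$. Then run a Kontsevich–Soibelman–type induction showing that the scattering of two positive walls produces only positive walls. This is exactly where acyclicity enters: it bounds the scattering to a tractable, rank-two-like ordering.

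\emph{Step 3: conclude.} Given a positive diagram, broken lines have positive bends, since each bend picks a monomial term of a positive wall function. Hence every theta function $\vartheta_g$ is positive in the torus of every chamber. Cluster monomials equal theta functions for $g$ in the cluster complex; this is proved by mutation-invariance of $\mathfrak D$ and the fact that $\vartheta_g$ is a based monomial in its own chamber. This gives the expansion claimed in Lemma~\ref{thm:acyclic-input} for every seed of the pattern. By Step~1, it holds for every integral compatible quantization.
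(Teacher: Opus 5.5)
The paper does not prove this statement. It is imported from the unavailable companion manuscript, so there is nothing internal to compare with, and your proposal has to stand on its own. It does not yet, because the one step with real content is left open. In the skew-symmetric case your Steps~2--3 reproduce known arguments (purity for quivers without potential, Davison--Mandel theta functions), and that case is already Davison's theorem. What the lemma adds is the valued case, and there your folding argument breaks at the quantum level:
\begin{itemize}
\item For an unfolding $(\widehat B,\Gamma)$, the $\Gamma$-invariant sublattice spanned by orbit sums does carry the folded form $DB$. But the unfolded walls are not supported on it: already the initial wall functions involve $z^{\ee_{\hat k}}$ for single vertices $\hat k$.
\item The coinvariant identification $\ee_{\hat k}\mapsto\ee_k$ is where those functions land, but it is not a map of quantum tori. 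For $\hat k\ne\hat k'$ in one orbit, $\hat Y_{\hat k}$ quasi-commutes nontrivially with $X_{\hat k}$ but commutes with $X_{\hat k'}$. These relations contradict each other once $X_{\hat k}$ and $X_{\hat k'}$ are identified, unless $q=1$.
\item The orbit product of dilogarithms would collapse to $\mathbb E_q(\hat Y_k)^{d_k}$, not to the $\mathbb E_{q^{d_k}}(\hat Y_k)$ forced by $\bt^T\Lambda=(D\ \ 0)$.
\end{itemize}
So the ``delicate point'' is the whole problem, not a check. Your fallback only restates it. A Kontsevich--Soibelman ordered-product induction produces the consistent completion, but says nothing about the signs of its walls. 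Positivity for the scattering of two walls with parameters $q^{d_1},q^{d_2}$ is exactly the rank-two quantum input you would have to prove. Acyclicity also does not make the diagram ``rank-two-like'': rank-two diagrams are acyclic, yet for $bc>4$ they already have dense walls and need deep geometric input. In the skew-symmetric case acyclicity enters through path algebras without potential and the purity of their moduli, and you offer no substitute for valued quivers.

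Two further steps need repair. In Step~3, coefficientwise positivity of the wall functions does not make quantum bends positive. Crossing a wall acts by conjugation,
\[
z^m\mapsto f\,z^mf^{-1}=z^mf(q^{c}z^n)f(z^n)^{-1},
\]
and already $f=1+z^n$ produces a coefficient proportional to $q^{c}-1$. What you need is a factorization of each wall function into quantum dilogarithms $\mathbb E(q^{k/2}z^{n})$ with nonnegative multiplicities, i.e.\ positive quantum DT invariants. In Step~1, consider a based-torus map from a fixed principal-coefficient seed to an arbitrary compatible $(\bt,\Lambda)$ that fixes the initial cluster variables. It must preserve $\lambda_{ij}$ for $i,j\le n$, and these vary with $\Lambda$. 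If you instead adjoin central principal rows, with form $\Lambda\oplus0$, the result is compatible but reduces nothing. The right tool is the quantum separation formula with quantum $F$-polynomials. It substitutes only $\hat Y_k=M(\bt_k)$, whose commutation $\bt^T\Lambda\bt=DB$ does not depend on $\Lambda$. In the scattering framework the step is unnecessary anyway.
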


To freeze the mutable directions outside $J\subseteq\{1,\ldots,n\}$,
keep all rows of $\bt$, retain only the columns indexed by $J$, and
leave the form unchanged. Ordering $J$ first in the coordinate
lattice gives the compatibility equation $(D_J\ \ 0)$ for the
restricted exchange matrix, where $D_J=\diag(d_j:j\in J)$ in the
chosen order. Mutations in $J$ agree with the original mutations;
the other generators are now invertible frozen variables.

\begin{samepage}
\begin{corollary}\label{cor:rank-two}
Fix two distinct mutable directions $a,b$ and freeze every other
mutable direction. Every cluster variable of this pair pattern is
positive in every seed of the pair pattern. Moreover, a based monomial
in any seed of the pair pattern with nonnegative $a,b$ exponents and
arbitrary integer exponents elsewhere is positive in every seed of
the pair pattern.
\end{corollary}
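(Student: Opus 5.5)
The plan is to obtain the first assertion from Lemma~\ref{thm:acyclic-input}, applied to the frozen pattern, and then to derive the monomial assertion from the first assertion by multiplicativity.

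\emph{The pair pattern satisfies the hypotheses of Lemma~\ref{thm:acyclic-input}.} Freezing every mutable direction outside $J=\{a,b\}$ produces a quantum cluster pattern of mutable rank two. By the discussion preceding the corollary, this pattern has the same form $\Lambda$ and the restricted exchange matrix, and it satisfies the compatibility equation with $D_J=\diag(d_a,d_b)$. So it is an integral compatible quantization, and its frozen variables, including the formerly mutable $X_c$ with $c\notin J$, are invertible. Its principal part is a $2\times2$ skew-symmetrizable matrix with off-diagonal entries $b_{ab}$ and $b_{ba}$. The oriented graph of such a matrix has at most one arrow between the two vertices and no loops, so it is acyclic. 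Hence the pair pattern is mutation-acyclic. Lemma~\ref{thm:acyclic-input} then says that every cluster variable of the pair pattern, namely the iterates of $X_a$ and $X_b$ under alternating mutations in $a$ and $b$, has a Laurent expansion with coefficients in $\N[q^{\pm1/2}]$ in every seed of the pair pattern. One point needs checking: the seeds of the pair pattern, with their toric frames, coincide with the seeds of the original pattern reached along $a,b$-paths. This holds because mutations in $J$ agree with the original mutations.

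\emph{The monomial assertion.} Let $W$ and $W'$ be seeds of the pair pattern, and take $h$ with $h_a,h_b\ge0$. By the normalization formula, $M_W(h)$ is a power of $q^{1/2}$ times the ordered product $X_1^{h_1}\cdots X_m^{h_m}$ of generators of $W$. Each factor can be placed in $\mathcal T_{W'}$ as follows.
\begin{itemize}
\item A generator $X_c$ with $c\ne a,b$ is frozen in the pair pattern. It is the same element in $W'$, so it equals $M_{W'}(\ee_c)$. Each of its integer powers is $M_{W'}(h_c\ee_c)$, which is positive, with possibly negative exponent.
\item The generators $X_{a;W}$ and $X_{b;W}$ are cluster variables of the pair pattern, so they are positive in $W'$ by the first part. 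They occur only with nonnegative powers.
\end{itemize}
Products of positive elements in a fixed based torus are positive by \eqref{eq:torus}, since the twisting factors are powers of $q^{1/2}$. Multiplying by the scalar power of $q^{1/2}$ also preserves positivity. Hence $M_W(h)$ is positive in $W'$.

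\emph{Expected obstacle.} The only real point is the verification that freezing preserves integral compatibility with positive $D_J$ and that the frozen-variable conventions of \cite{MA} are met. This was addressed in the text before the corollary.
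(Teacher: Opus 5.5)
Your proposal is correct and follows the paper's proof essentially verbatim. The first assertion comes from Lemma~\ref{thm:acyclic-input} applied to the frozen pattern, since a $2\times2$ skew-symmetrizable principal part is acyclic. The second comes from writing $M_W(h)$ as a power of $q^{1/2}$ times an ordered product of nonnegative powers of the two active variables and integer powers of the unchanged frozen generators, then using the first assertion and \eqref{eq:torus}. Your extra checks (integral compatibility with $D_J$, and that the pair-pattern seeds coincide with the original seeds along $a,b$-paths) are exactly the points the paper settles in the paragraph just before the corollary.
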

\end{samepage}
\begin{proof}
A skew-symmetrizable two-by-two principal matrix is acyclic, so the
first assertion follows from Lemma~\ref{thm:acyclic-input} by freezing.

For the second assertion, write the based monomial as an ordered
product of nonnegative powers of the two active variables and integer
powers of the frozen generators, multiplied by a power of $q^{1/2}$.
Expand the active variables in the target seed using the first
assertion. The frozen generators are unchanged, and
\eqref{eq:torus} makes each resulting product positive.
\end{proof}

\section{A positive decomposition in three adjacent seeds}
\label{sec:decomposition}

We first compare the least exponent in an unchanged direction across
one mutation, then use this comparison to obtain a positive
decomposition in three adjacent seeds.
For distinct mutable directions $a,b$, a based monomial is
\emph{localized in the pair $a,b$} if its $a,b$ exponents are
nonnegative; the other exponents may be arbitrary integers.
For $0\ne F\in\mathcal T_W$, its least exponent in direction $a$ is
$\min\{h_a:h\in\operatorname{supp}_W(F)\}$.

\begin{lemma}\label{lem:min}
Suppose $F\ne0$ is Laurent in a seed $W$ and its mutation $W[b]$.
For an unchanged direction $a\ne b$, the least $a$ exponent of $F$
is the same in these two seeds.
\end{lemma}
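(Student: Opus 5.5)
Let $X_k$ denote the generators of $W$ and $X_b'$ the new generator of $W[b]$. The two frames agree on every generator except the $b$th. I will compare expansions through the substitution $X_b'=X_b^{-1}(\text{binomial})$. Concretely, write the mutation \eqref{eq:mutation} as $X_b' = M_W(-\ee_b+[\bt_b]_+) + M_W(-\ee_b+[-\bt_b]_+)$. Both terms have $a$-exponent $[b_{ab}]_+$ and $[-b_{ab}]_+$ respectively. These need not be zero, so I will first reorganize the comparison to kill this dependence.

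The cleaner route is to pass to a graded decomposition. Let $F=\sum_h c_h M_{W[b]}(h)$ in $W[b]$. For each $h$, the monomial $M_{W[b]}(h)$ equals, up to a power of $q^{1/2}$, an ordered product of $X_k^{h_k}$ for $k\ne b$ with $(X_b')^{h_b}$. Rather than tracking the $a$-exponent directly, I will use a modified grading on $\mathcal T_W$. Assign to $M_W(u)$ the degree
\[
\deg(u)=u_a - [b_{ab}]_+\cdot(\text{something in }u_b).
\]
I expect this to be awkward, however. So I plan to argue asymmetrically and by symmetry instead, showing only $\min_W\ge\min_{W[b]}$; the reverse inequality follows by the same argument applied to $W[b]$, since mutation is an involution.

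For the inequality, I first note that the $a$-exponent in $W$ of any term arising from $M_{W[b]}(h)$ is at least $h_a+\min(0,h_b)\cdot(\dots)$. This is not clean either. So the key step, and the main obstacle, is to make the exponent shift from the binomial harmless. My first attempt will be a \emph{tropical/valuation argument} at $q^{1/2}=1$, extended to the frozen-$a$ picture: temporarily regard direction $a$ as frozen. Then $X_a$ is a frozen invertible variable in the rank-one pattern in direction $b$ (freezing as in Section~\ref{sec:prelim}). Expansions in both seeds are Laurent in $X_a$ with coefficients in the subtorus generated by the other variables. This gives
\[
F=\sum_{e\in\Z} X_a^{e}\,G_e ,
\]
with $G_e$ independent of $X_a$ in both frames simultaneously, up to $q$-powers. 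The difficulty is that the exchange binomial contains $X_a^{[\pm b_{ab}]_+}$. I will handle this by grading the $b$-variable as well: in $W$, set the grading $\deg X_a=1$, $\deg X_b=\tfrac12 b_{ab}$, and zero on the other generators. With this grading the exchange binomial becomes homogeneous: both monomials have degree $\tfrac12|b_{ab}|-\tfrac12 b_{ab}\cdot(\dots)$. Checking that the two terms of \eqref{eq:mutation} get the same degree is a short computation using $[x]_+-[-x]_+=x$. Hence $X_b'$ is homogeneous, and homogeneous components of $F$ coincide in the two tori.

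It then remains to compare least $a$-exponents with least total degree. My fallback, if this grading fails to isolate the $a$-exponent exactly, is to restrict to $F$ Laurent in both seeds. In that case, clear the $b$-denominators: write $F=\sum_{e}X_a^eP_e$ in $W[b]$, and note that $P_{e_{\min}}$ is a nonzero Laurent polynomial. Then specialize $X_a\to0$ after multiplying by $X_a^{-e_{\min}}$, using that the exchange binomial at $X_a=0$ remains nonzero because it is a sum of two distinct monomials. This shows that no cancellation can lower or raise the least exponent in either direction.
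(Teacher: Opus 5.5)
Your proposal has a genuine gap. Neither of your main devices works, and the fallback omits the step that actually proves the lemma. The grading $\deg X_a=1$, $\deg X_b=\tfrac12 b_{ab}$ cannot make the exchange binomial homogeneous. Both monomials $M_W(-\ee_b+[\bt_b]_+)$ and $M_W(-\ee_b+[-\bt_b]_+)$ have $b$-exponent $-1$ (as $b_{bb}=0$), so any weight placed on $X_b$ cancels. Their degrees differ by $[b_{ab}]_+-[-b_{ab}]_+=b_{ab}$, so the identity you invoke proves inhomogeneity whenever $b_{ab}\neq0$. Likewise, the coefficients $G_e$ in $F=\sum_eX_a^eG_e$ are not ``independent of $X_a$ in both frames simultaneously'', because $X_b'$ contains $X_a^{[\pm b_{ab}]_+}$. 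Your estimate for $M_{W[b]}(h)$ also fails for $h_b<0$, since $(X_b')^{h_b}$ is not Laurent in $W$ at all.

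The fallback is in the right spirit: specializing $X_a\to0$ is the residue map of the $X_a$-adic valuation the paper uses. But the non-cancellation is asserted, not proved. Knowing that the residue of $X_b'$ is nonzero does not imply that the residue of $P_{e_{\min}}$ is nonzero. If the residue of $X_b'$ were some $c$ in the coefficient ring, the nonzero element $X_b'-c$ would specialize to $0$. The missing chain is as follows. At most one of $[b_{ab}]_+,[-b_{ab}]_+$ is positive; this, not the presence of two distinct monomials, is why a term survives. Hence the residue of $X_b'$ is $CX_b^{-1}$ with $C\in K^\times$, where $K$ is generated by the $X_j$ with $j\notin\{a,b\}$. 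Consequently $(X_b')^r$ has residue $C_rX_b^{-r}$ with $C_r\in K^\times$ for every $r\in\Z$. Writing $P_{e_{\min}}=\sum_rk_r(X_b')^r$ with $k_r\in K$, its residue $\sum_rk_rC_rX_b^{-r}$ is nonzero because distinct powers of $X_b$ are linearly independent over $K$.

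You also need a ring in which ``$X_a\to0$'' is a homomorphism and which contains $\mathcal T_W$ together with the negative powers of $X_b'$. On a quantum torus $X_a$ is invertible and cannot be set to zero. The paper works in $K(X_b)((X_a;\sigma))$, where the valuation of $F$ is a single number computable from either expansion, which gives both inequalities at once. An elementary alternative is to right-multiply $F$ by $(X_b')^N$ to remove negative powers of $X_b'$. Then specialize inside the skew polynomial ring in $X_a$ over the torus of the remaining $W$-variables, and use that the least $a$-exponent is additive under products in $\mathcal T_W$.
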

\begin{proof}
In the coordinates of $W$, let $K$ be the fraction division ring of
the quantum torus generated by $X_j$, $j\notin\{a,b\}$, over
$\Q(q^{1/2})$. Conjugation by $X_a$ defines an automorphism $\sigma$
of the division subring $K(X_b)$ generated by $K$ and $X_b$.
We use the $X_a$-adic valuation on the skew rational function field
$K(X_b)(X_a;\sigma)$, viewed inside the skew Laurent series division
ring $K(X_b)((X_a;\sigma))$.

Ordering the exchange relation gives
\[
 X_b'=Q(X_a)X_b^{-1},\qquad Q(X_a)\in K[X_a;\sigma].
\]
The two exchange monomials have nonnegative $X_a$ degrees, at least
one of which is zero. Their positive coefficients give a nonzero
constant term $C\in K^\times$. Thus $X_b'$ has valuation zero and
initial coefficient $CX_b^{-1}$.
For every $r\in\Z$, the initial coefficient of $(X_b')^r$ is
$C_rX_b^{-r}$ for some $C_r\in K^\times$.

If $f(X_b')=\sum_r k_r(X_b')^r$ is a nonzero finite Laurent
polynomial in $X_b'$ over $K$, its constant coefficient as an
$X_a$-series is $\sum_r k_rC_rX_b^{-r}$. This is nonzero because
distinct powers of $X_b$ are linearly independent over $K$.
Hence every such $f(X_b')$ has valuation zero.
Group the Laurent expansion of $F$ in $W[b]$ as
$F=\sum_j f_j(X_b')X_a^j$. Its valuation is the least $j$ with
$f_j\ne0$. In $W$, the same valuation is the least $X_a$ exponent,
proving the assertion.
\end{proof}

\begin{lemma}\label{lem:glue}
Let $W$ be a seed and $a,b$ distinct mutable directions.
Suppose $F$ is positive in $W,W[a],W[b]$, and every
$h\in\operatorname{supp}_{W[a]}(F)$ with $h_a>0$ satisfies $h_b\ge0$.
Then
\begin{equation}\label{eq:three}
 F=P+R_0+R_1,
\end{equation}
where $P,R_0,R_1$ are finite positive sums of localized $a,b$
monomials in $W[a],W,W[b]$, respectively. Consequently, $F$ is positive
in every seed of the $a,b$ pattern through $W$.
\end{lemma}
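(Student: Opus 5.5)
The plan is to build the three pieces one after another, peeling off terms according to their $a$ and $b$ exponents. The final assertion then follows from Corollary~\ref{cor:rank-two}. Each of $P,R_0,R_1$ is a positive sum of localized $a,b$ monomials in a seed of the $a,b$ pair pattern through $W$. By the second assertion of that corollary, each such monomial is positive in every seed of that pair pattern. Since sums of positive elements in a fixed based torus are positive, \eqref{eq:three} gives positivity of $F$ in every seed of the $a,b$ pattern.

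\emph{Step 1: the piece $P$.} I take
\[
P=[F]_{W[a],a>0}.
\]
By hypothesis every monomial here has $h_a>0$ and $h_b\ge0$, so $P$ is a positive sum of localized monomials in $W[a]$. I then set $G=F-P=[F]_{W[a],a\le0}$. In $W[a]$ this is a finite sum $\sum_{j\ge0}g_j\,(X_a')^{-j}$, where the coefficients $g_j$ do not involve $X_a'$.

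Next I rewrite $(X_a')^{-1}$ in $W$. Ordering \eqref{eq:mutation} gives $X_a'=X_a^{-1}Q$, where the exchange binomial $Q$ lies in the subtorus generated by the $X_i$ with $i\ne a$; the $q$-powers coming from reordering are harmless. Hence
\[
(X_a')^{-j}=Q_j\,X_a^{j}
\]
with $Q_j$ in the division ring of that subtorus. Since $G$ is Laurent in $W$ (both $F$ and $P$ are), comparing $X_a$-coefficients shows that every $h\in\operatorname{supp}_W(G)$ has $h_a\ge0$. Consequently
\[
[F]_{W,a<0}=[P]_{W,a<0}.
\]

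\emph{Step 2: the pieces $R_0$ and $R_1$.} I set $R_0=[G]_{W,b\ge0}$; its monomials have $h_a\ge0$ and $h_b\ge0$, so they are localized in $W$. For the remainder $R_1=[G]_{W,b<0}$ I run the mirror argument through $W[b]$. Terms of $W[b]$-degree $h_b>0$ should account for all negative $b$-exponents in $W$. Lemma~\ref{lem:min}, applied to the unchanged direction $a$ across the mutation $W\to W[b]$, shows that $a$-exponents of $R_1$ remain $\ge0$ in $W[b]$. The goal is to identify $R_1$ with $[F']_{W[b],b>0}$ for the appropriate part $F'$ of $F$, which would make $R_1$ a sum of localized monomials in $W[b]$.

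\emph{Main obstacle: positivity of $G$, $R_0$ and $R_1$.} These are differences of positive quantities, so positivity is not automatic. The first thing I would try is a coefficientwise domination
\[
[F]_{W,a\ge0}\ \ge\ [P]_{W,a\ge0}.
\]
To get it I would expand $P$ in $W$ monomial by monomial, using the $q$-binomial expansion of positive powers of $X_a'$ (positive by Corollary~\ref{cor:rank-two}). Then I would match these contributions against terms of $F$, using positivity of $F$ in $W$ together with the tropical constraint of the hypothesis.

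If the domination fails as stated, the fallback is to define $P$ more economically. Concretely, I would choose $P$ as the smallest positive sub-sum of $[F]_{W[a],a>0}$ whose $W$-expansion accounts for $[F]_{W,a<0}$, and then use the specialization $q^{1/2}=1$ (where positivity forces $c_h(1)>0$) to control cancellations.
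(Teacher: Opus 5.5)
Your decomposition matches the paper's ($P=[F]_{W[a],a>0}$, then split the remainder by $b$-degree). There is a genuine gap, however: you never prove that $G$, $R_0$, $R_1$ are positive, and your ``main obstacle'' paragraph offers only a matching heuristic and a vague fallback.

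For $G$ and $R_0$ the obstacle is illusory. Since $b_{aa}=0$, both monomials in \eqref{eq:mutation} defining $X_a'$ have $a$-exponent exactly $-1$. Work in the skew Laurent ring in $X_a$ over the division ring generated by the unchanged variables; powers of $X_a$ are independent there, which gives a $\Z$-grading. In it, $X_a'$ is homogeneous of degree $-1$, so $M_{W[a]}(h)$ is homogeneous of degree $-h_a$. You used this only for $G$. Applied to $P$, whose $W[a]$-monomials all have $h_a>0$, it gives $[P]_{W,a\ge0}=0$. So your proposed inequality $[F]_{W,a\ge0}\ge[P]_{W,a\ge0}$ is vacuous, and in fact $P=[F]_{W,a<0}$ and $G=[F]_{W,a\ge0}=[F]_{W[a],a\le0}$.

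The general principle, which is the key idea of the paper's proof, is this: for any element Laurent in both $W$ and $W[k]$, uniqueness of homogeneous components gives $[\,\cdot\,]_{W[k],k\in I}=[\,\cdot\,]_{W,k\in-I}$. Every piece is then a degree truncation of a positive expansion of $F$, so no cancellation can occur. In particular $R_0$ is the part of the $W$-expansion of $F$ with $h_a,h_b\ge0$, hence positive. No $q$-binomial matching or specialization argument is needed.

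For $R_1$ one more ingredient is missing: the second use of the support hypothesis, which your Step~2 never makes. Since $P$ is positive in the pair pattern by Corollary~\ref{cor:rank-two}, $G=F-P$ lies in $\mathcal T_{W[b]}$. The grading identity for $k=b$ then gives $R_1=[G]_{W,b<0}=[G]_{W[b],b>0}$. For this to be positive you need it to equal $[F]_{W[b],b>0}$, i.e.\ $[P]_{W[b],b>0}=0$. This holds for two reasons: each $W[a]$-monomial of $P$ has $h_b\ge0$, and expanding $(X_a')^{h_a}$ in $W$ only adds the nonnegative exponent vectors $[\pm\bt_a]_+$. So $P$ has nonnegative $b$-exponents in $W$, and therefore only nonpositive ones in $W[b]$.

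Without this step, your ``appropriate part $F'$ of $F$'' is undetermined. Nor is $R_1$ shown to be positive in $W[b]$, or even Laurent there, which Lemma~\ref{lem:min} requires. With these two observations, your appeal to Lemma~\ref{lem:min} for the $a$-exponents of $R_1$ and your final use of Corollary~\ref{cor:rank-two} go through as in the paper.
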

\begin{proof}
The case $F=0$ is immediate. Suppose $F\ne0$.
For $k\in\{a,b\}$, let $K_k$ be the division subring of
$\mathcal F$ generated over $\Q(q^{1/2})$ by the variables
$X_j=M_W(\ee_j)$ with $j\ne k$. Conjugation by $X_k$
restricts to an automorphism $\sigma_k$ of $K_k$. The exchange
relation has the form
\[
 X_k'=A_kX_k^{-1},\qquad A_k\in K_k^\times.
\]
Indeed, $b_{kk}=0$, and factoring $X_k^{-1}$ on the right leaves
two monomials in the unchanged variables whose coefficients are
powers of $q^{1/2}$. Their sum $A_k$ is therefore nonzero.
Both $\mathcal T_W$ and $\mathcal T_{W[k]}$ are contained in
the skew Laurent ring $K_k[X_k^{\pm1};\sigma_k]$, where
$X_ku=\sigma_k(u)X_k$ for $u\in K_k$.
The powers of $X_k$ are linearly independent over $K_k$:
after clearing a common left denominator in the quantum torus
generated by the unchanged variables, this follows from the
linear independence of the based monomials in $W$.
Thus this ring has a direct $\Z$-grading with
$\deg K_k=0$, $\deg X_k=1$, and $\deg X_k'=-1$.
For every $G\in\mathcal T_W\cap\mathcal T_{W[k]}$ and
$I\subseteq\Z$, uniqueness of the finite homogeneous
decomposition therefore gives, as an equality in $\mathcal F$,
\[
 [G]_{W[k],k\in I}=[G]_{W,k\in -I},
 \qquad -I=\{-r:r\in I\}.
\]
In particular, taking $k=a$, set
\[
 P=[F]_{W[a],a>0}=[F]_{W,a<0},\qquad
 R=F-P=[F]_{W,a\ge0}.
\]
The support hypothesis makes every monomial of $P$ in $W[a]$ localized
in $a,b$. By Corollary~\ref{cor:rank-two}, $P$ is positive throughout
the pair pattern. The remainder $R$ is positive in $W$ and $W[a]$, is
Laurent in $W[b]$, and has nonnegative $a$ exponents in $W$.
If $R=0$, take $R_0=R_1=0$; assume now that $R\ne0$.

Expanding the positive powers of the mutated $a$ generator shows
that $P$ has nonnegative $b$ exponents in $W$. Reversing the $b$
mutation gives only nonpositive exponents in direction $b$ of $W[b]$.
It follows that
\[
 [R]_{W[b],b>0}=[F]_{W[b],b>0},\qquad
 [R]_{W[b],b\le0}=[R]_{W,b\ge0}.
\]
The first part is positive in $W[b]$. For the second part, expand
the nonnegative powers of the $b$ generator in $W$ by the reverse
exchange formula; this gives a positive expression in $W[b]$.
Thus $R$ is positive in $W[b]$. Lemma~\ref{lem:min} shows that its
$a$ exponents there are nonnegative.

Set
\[
 R_0=[R]_{W,b\ge0},\qquad R_1=[R]_{W[b],b>0}.
\]
The preceding degree identities give $R=R_0+R_1$.
Both pieces have nonnegative $a,b$ exponents in their respective
seeds, so \eqref{eq:three} follows.
Corollary~\ref{cor:rank-two} carries all three pieces through the
pair pattern.
\end{proof}

\section{Support separation along a directed three-cycle path}
\label{sec:separation}

We now establish the support condition needed in
Lemma~\ref{lem:glue} for a path in a mutation-cyclic rank-three pattern.

\begin{proposition}\label{prop:separator}
Suppose every principal matrix in the mutation class has a directed
three-cycle as its oriented graph. Let a reduced path of length at
least two run from $S_0$ to $S$, with first label $i_0$ and last two
labels $c,a$, and let $b$ be the remaining mutable label.
If $M_{S_0}(\ee_{i_0})$ is positive in $S$, then every
$h\in\operatorname{supp}_S(M_{S_0}(\ee_{i_0}))$ with $h_a>0$ satisfies $h_b>0$.
\end{proposition}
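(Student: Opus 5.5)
Positivity in $S$ lets me work after the specialization $q^{1/2}=1$, with the frozen variables set to $1$: every support vector of $F=M_{S_0}(\ee_{i_0})$ in $S$ survives there with a positive coefficient, and its mutable projection is unchanged. So the claim becomes a statement about the classical Laurent expansion of a cluster variable in a mutation-cyclic rank-three pattern, and I would prove it by a tropical (valuative) argument. The plan is to show that every mutable exponent vector $(h_a,h_b,h_c)$ in $\operatorname{supp}_S(F)$ with $h_a>0$ has $h_b>0$, by tracking how the last two mutations act on supports. Write $S=S''[c][a]$, set $S'=S''[c]$, and note that $S=S'[a]$.

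First I use the last label $a$. Lemma~\ref{lem:min}, applied to the unchanged directions $b$ and $c$, shows that the least $b$ and $c$ exponents are the same in $S'$ and $S$. The grading argument in the proof of Lemma~\ref{lem:glue} then gives
\[
[F]_{S,a>0}=[F]_{S',a<0}.
\]
So a monomial of $F$ in $S$ with $h_a>0$ comes from monomials of $F$ in $S'$ with negative $a$ exponent, rewritten through $X_a=(\text{exchange binomial})\,X_a'^{-1}$. Each factor $X_a^{-1}$ in $S'$ contributes, via the two exchange monomials $M(-\ee_a+[\bt_a]_+)$ and $M(-\ee_a+[-\bt_a]_+)$, the exponents of whichever of $b,c$ are on the corresponding side of $a$. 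In a directed three-cycle, $b$ and $c$ lie on opposite sides of $a$. Hence every monomial in $S$ with $h_a>0$ receives a $b$ contribution from one exchange term and a $c$ contribution from the other, and the $b$ exponent depends on the mixture of the two.

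The key step is to show that, in $S'$, the part $[F]_{S',a<0}$ already has strictly positive $b$ exponents in an appropriate sense. I would get this from the second-to-last mutation $c$: in $S'$ the variable $X_c$ is freshly mutated, and since the path is reduced, $F$ is not the variable $X_{c;S'}$ itself. I would then invoke the sign-coherence/denominator structure for mutation-cyclic rank three. In the spirit of Lee--Schiffler, the cluster variable's denominator vector in a seed reached by a reduced path of length at least two has all entries positive, and the exponents of its Laurent expansion are controlled by a compatibility degree. So I would aim to prove by induction on path length that in $S$, with last two labels $c,a$, the Newton polytope of $F$ projected to the $(a,b)$ plane lies in the region $\{h_a\le 0\}\cup\{h_b>0\}$, using cyclicity to guarantee that every matrix entry involved has absolute value at least $2$ (Markov-type condition), so that the exchange monomials strictly increase $b$.

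The main obstacle is this induction: proving the support region claim inductively requires controlling all three exponents simultaneously, and it needs the mutation-cyclic hypothesis, through the inequality $|b_{ij}|\ge2$ and the cyclic sign pattern, at precisely one inequality. I would first try to establish the statement for the extremal (tropical) points only, namely the $g$-vector or $F$-polynomial degree, and extend to the full support by convexity of the support's Newton polytope. Positivity ensures no cancellation, so every vertex is realized.
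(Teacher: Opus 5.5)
\textbf{Gap.} Your reduction to the classical specialization, with $q^{1/2}$ and the frozen variables set to $1$, is fine. The decisive step, however, is never supplied. The claim that the support in $S$ avoids the quadrant $\{h_a>0,\ h_b\le0\}$ is only announced. It rests on an unspecified induction, a ``compatibility degree'' and denominator vectors, and none of these yields it:
\begin{itemize}
\item Denominator vectors bound each exponent separately from below, while you need a joint condition on $(h_a,h_b)$.
\item The input $|b_{ij}|\ge2$ is false in the skew-symmetrizable setting. The matrix $B=\left(\begin{smallmatrix}0&1&-1\\-4&0&2\\4&-2&0\end{smallmatrix}\right)$, with symmetrizer $\diag(4,1,1)$, mutates to $-B$ in every direction. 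So it is mutation-cyclic but has entries $\pm1$.
\item Your fallback, checking extremal points and extending ``by convexity of the Newton polytope'', fails outright. The target set $\{h_a\le0\}\cup\{h_b>0\}$ is not convex: $(3,1)$ and $(-1,-3)$ lie in it, but their midpoint $(1,-1)$ does not. So the vertices of the support polytope do not control the remaining support points.
\item Your treatment of the last mutation runs the wrong way. Up to ordering, $X_a^{-1}=X_a'\cdot(\text{binomial})^{-1}$, which is not Laurent, so monomials of $[F]_{S',a<0}$ do not individually become monomials in $S$. The correct direction, expanding $S$-monomials with $h_a>0$ into $S'$, only trades the claim for an analogous unproved one in $S'$.
\end{itemize}

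\textbf{Missing idea.} The paper proves a convex strengthening: the strict half-plane bound $h_a-\lambda h_b<0$ for some $\lambda>0$.
\begin{itemize}
\item Since $F$ is positive in $S$, $\max\{\langle\omega,h\rangle : h\in\operatorname{supp}_S(F)\}$ equals the growth exponent of the specialized $F$ at $x_u=T^{\omega_u}$. This exponent is computed by running max-tropical mutation from $S$ back to $S_0$.
\item The cyclic hypothesis supplies a positive vector $w$ with $wB=0$ at every seed, transported by $w_k'=\kappa-w_k$.
\item Tropically transport $\xi^0$, with $\xi^0_{i_0}=-1$ and other entries $0$. The ratios $\rho_u=\xi_u/w_u$ acquire a new strict maximum at each step of the reduced path, so $\rho_a>\rho_c>\rho_b\ge0$ at $S$.
\item The combination $\omega=-\tau w+s\xi$ is again tropically transported. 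It equals $(\omega_a,\omega_c,\omega_b)=(1,0,-\lambda)$ at $S$ and takes the value $-\tau w^0_{i_0}-s<0$ on the source generator.
\end{itemize}
Without a linear functional of this kind, whose tropical value on $F$ is provably negative, your outline does not close.
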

\begin{proof}
Give the original frozen generators weight zero. We first construct
a row vector $w$ of positive rational homogeneous weights on the
three mutable directions.
At a cyclic seed, order the mutable labels as $(k,i,j)$ so that
\[
 B=\begin{pmatrix}0&d&-f\\-d'&0&e\\f'&-e'&0\end{pmatrix},
 \qquad d,d',e,e',f,f'>0.
\]
Symmetrizability gives $def'=d'e'f$.
Choose $w_i\in\Q_{>0}$ and set
$w_j=(d'/f')w_i$, $w_k=(e/f)w_i$. Then
\[
 -d'w_i+f'w_j=0,\qquad dw_k-e'w_j=0,\qquad -fw_k+ew_i=0,
\]
so $wB=0$. The two mutable exchange powers at $k$ are
$p_k=d'$ and $r_k=f'$, with common weight
\[
 \kappa=p_kw_i=r_kw_j.
\]
Under mutation at $k$, put $w_k'=\kappa-w_k$ and leave the other
weights unchanged. The opposite entry becomes $e-d'f<0$, since
the mutated graph is again a directed three-cycle. Hence
$w_k'=(d'-e/f)w_i>0$.
For $B'=\mu_k(B)$, we have
\[
 \begin{aligned}
 -dw_k'+(df'-e')w_j&=dw_k-e'w_j=0,\\
 fw_k'+(e-d'f)w_i&=-fw_k+ew_i=0,
 \end{aligned}
\]
and $d'w_i-f'w_j=0$, so $w'B'=0$.
Starting with this construction at $S_0$, we obtain positive
homogeneous weights along the path; denote the initial vector by $w^0$.

Next take the mutable weight vector $\xi^0$ with
$\xi^0_{i_0}=-1$ and the other two coordinates zero, and transport
$\xi$ by tropical mutation:
\[
 \xi_k'=\max(p_k\xi_i,r_k\xi_j)-\xi_k.
\]
Here $i,j,p_k,r_k$ are taken at the current seed, and unchanged
coordinates retain their values. For each mutable label $u$, write
$\rho_u=\xi_u/w_u$ and $\rho_{\max}=\max_u\rho_u$.
If the maximum occurs away from $k$, then
\begin{equation}\label{eq:ratios}
 \rho_k'=\frac{\kappa\rho_{\max}-w_k\rho_k}{\kappa-w_k}
 =\rho_{\max}+\frac{w_k}{w_k'}(\rho_{\max}-\rho_k).
\end{equation}
The first mutation makes $i_0$ the unique positive maximum.
Each subsequent mutation is in a different direction from the
previous one, and \eqref{eq:ratios} gives a new strict maximum.
At $S$, therefore,
\[
 \rho_a>\rho_c>\rho_b\ge0,\qquad \rho_c>0.
\]
Using these terminal values, set
\[
 s=\frac1{w_a(\rho_a-\rho_c)},\qquad
 \tau=s\rho_c,\qquad \lambda=sw_b(\rho_c-\rho_b)>0.
\]
The weights $\omega=-\tau w+s\xi$ obey tropical mutation: the
two $w$-weights in each exchange relation agree, and $s>0$, so their
common contribution can be taken outside the maximum.
At $S$ this gives
\[
 \omega_a=1,\qquad \omega_c=0,\qquad \omega_b=-\lambda,
\]
while at $S_0$ the source generator has weight
$-\tau w_{i_0}^0-s<0$.

Let $x_1,x_2,x_3$ be independent commuting variables, and consider
the ring homomorphism
\[
 \phi:\mathcal T_S\longrightarrow
 \Z[x_1^{\pm1},x_2^{\pm1},x_3^{\pm1}],\qquad
 \phi(q^{1/2})=1,\qquad
 \phi(M_S(h))=x_1^{h_1}x_2^{h_2}x_3^{h_3}.
\]
Thus $\phi$ specializes $q^{1/2}$ and all original frozen
variables to $1$. By the Laurent phenomenon, every cluster
variable along the path belongs to $\mathcal T_S$.
At any step, multiply the quantum exchange relation on the left
by the variable being mutated. The resulting identity involves
only nonnegative powers of the current seed variables and powers
of $q^{1/2}$, so it can be specialized by $\phi$.
It becomes the classical exchange identity with the original
frozen variables set to $1$.

Starting at $S$ and proceeding backwards, the specialized current
variables are nonzero subtraction-free rational functions of
$x_1,x_2,x_3$. This holds initially, and the specialized exchange
identity determines the next variable by division by the current
variable in $\Q(x_1,x_2,x_3)$; the result is again a nonzero
subtraction-free rational function. Induction therefore identifies
the images under $\phi$ of all variables along the path with their
classical counterparts. 

Now evaluate $x_u=T^{\omega_u}$ for $T>0$ and follow the
classical path backwards. Every resulting variable is a positive
function of $T$ with an asymptotic expression
$f(T)=cT^\alpha(1+o(1))$ as $T\to+\infty$, where $c>0$.
Indeed, the terminal variables have this form, and this class of
functions is closed under addition, multiplication, and division.
For such a function define
\[
 \nu_\infty(f)=\lim_{T\to+\infty}
 \frac{\log f(T)}{\log T}=\alpha.
\]
For functions of this form,
\[
 \begin{aligned}
 \nu_\infty(fg)&=\nu_\infty(f)+\nu_\infty(g),\\
 \nu_\infty(f/g)&=\nu_\infty(f)-\nu_\infty(g),\\
 \nu_\infty(f+g)&=\max\{\nu_\infty(f),\nu_\infty(g)\}.
 \end{aligned}
\]
In the last identity, equal exponents cause no cancellation
because the leading coefficients are positive. Hence the growth
exponents obey the max tropical mutation rule. This rule is
involutive together with exchange-matrix mutation, so the backwards
recursion recovers the weight vector $\omega$ at each seed along
the path. In particular, the growth exponent of the specialized
source generator is $-\tau w_{i_0}^0-s$.

Since the source generator is positive in $S$, specialization preserves its mutable
support, and this growth exponent is also the maximum of
$h_a-\lambda h_b$ over $h\in\operatorname{supp}_S(M_{S_0}(\ee_{i_0}))$.
Consequently every such $h$ satisfies
\begin{equation}\label{eq:separator}
 h_a-\lambda h_b\le-\tau w_{i_0}^0-s<0.
\end{equation}
In particular, $h_a>0$ implies $h_b>0$.
\end{proof}

\section{Induction on the full mutation word}
\label{sec:induction}

We combine the support separation and the three-seed decomposition
to prove Theorem~\ref{thm:A}.

\begin{proof}[Proof of Theorem~\ref{thm:A}]
If the principal matrix is mutation-acyclic, the result follows from
Lemma~\ref{thm:acyclic-input}. Otherwise every principal matrix in
its mutation class has a directed three-cycle as its oriented graph:
on three vertices, this is the only possible oriented cycle.

In this case, we prove positivity simultaneously for every source
seed and source generator by strong induction on the mutation word
length $N$. The empty word gives a based monomial.
Consecutive repeated labels cancel, so we may assume the word is
reduced. If the first mutation leaves the source generator unchanged,
move the source seed across that edge and apply induction to the
shorter word. We may therefore assume that the first label is the
source direction.

A word involving at most two labels is covered by
Corollary~\ref{cor:rank-two}; a source generator outside those
directions stays unchanged. Suppose now that all three labels occur.
Choose the maximal terminal subword using the last two labels, and
write its length as $\ell\ge2$. The preceding prefix is nonempty
and ends in the third label $c$. Let $W$ be its terminal seed, and
call the first label of the suffix $a$ and the other label $b$.
For $r=N-\ell$, the paths from the source to
\[
 W,\qquad W[a],\qquad W[b]
\]
have lengths $r,r+1,r+1<N$. By induction, the source generator is positive in all three seeds.

The path to $W[a]$ starts in the source direction and ends in $c,a$.
It is reduced and has length at least two, so
Proposition~\ref{prop:separator} gives
$h_a>0\Rightarrow h_b>0$ for every supported exponent of the source generator in $W[a]$.
Lemma~\ref{lem:glue} now expresses this generator as three positive sums of
localized $a,b$ monomials. Corollary~\ref{cor:rank-two} carries these
sums through the terminal two-direction segment, proving positivity
at its endpoint and completing the induction.

For the variable $X_{j;t}$ in the statement, take $\Sigma_t$ as the
source seed and follow the path to $t_0$.
\end{proof}

\end{document}